\theoremstyle{definition}
\newtheorem{definition}{Definition}
\newtheorem{example}[definition]{Example}
\newtheorem{theorem}[definition]{Theorem}
\newtheorem{corollary}[definition]{Corollary}
\newtheorem{remark}[definition]{Remark}
\DeclareMathOperator{\id}{id}
\DeclareMathOperator{\pr}{pr}
\DeclareMathOperator{\ev}{ev}
\newcommand{\idx}{\mathsf{idx}}
\newcommand{\fix}{\mathsf{fix}}
\newcommand{\app}{\mathsf{app}}
\newcommand{\lam}{\mathsf{lam}}
\DeclareMathOperator{\disc}{disc}
\newcommand\CC{\mathsf{C}}
\newcommand\NN{\mathbb{N}}
\newcommand\Set{\mathsf{Set}}
\newcommand\Top{\mathsf{Top}}
\newcommand\Bool{\texttt{Bool}}
\newcommand\onto{\twoheadrightarrow}
\begin{document}

\title{Substructural fixed-point theorems and the diagonal argument: theme and variations}
\author{David Michael Roberts}
\email{david.roberts@adelaide.edu.au}
\homepage{https://ncatlab.org/nlab/show/David+Michael+Roberts}
\orcid{0000-0002-3478-0522}
\thanks{This work was supported by the Australian Research Council’s Discovery Projects funding scheme (grant number DP180100383), funded by the Australian Government.}
\affiliation{School of Computer and Mathematical Sciences, The University of Adelaide, Adelaide, Australia}
\maketitle

\begin{abstract}
This article re-examines Lawvere's abstract, category-theoretic proof of the fixed-point theorem whose contrapositive is a `universal' diagonal argument. The main result is that the necessary axioms for both the fixed-point theorem and the diagonal argument can be stripped back further, to a semantic analogue of a weak substructural logic lacking weakening or exchange.
\end{abstract}

\setlength{\epigraphwidth}{0.8\textwidth}
\epigraph{%
\emph{Dieser Beweis erscheint nicht nur wegen seiner grossen Einfachheit, sondern namentlich auch aus dem Grunde bemerkenswert, weil das darin befolgte Princip sich ohne weiteres  \ldots ausdehnen l\"asst, \ldots}\\%
This proof appears remarkable not only because of its great simplicity, but also for the reason that its underlying principle can readily be extended,\ldots}{Georg Cantor}

Little could Cantor have foreseen exactly \emph{how} far the principle of his simple proof \cite{Cantor_1892} would be extended.
The technique of diagonalisation has appeared in many places, and this was captured by an abstract category-theoretic proof of a diagonal argument by Lawvere \cite{Lawvere_69}, applying to not just bare sets and functions, but more structured objects.
A key ingredient of Cantor's original proof\footnote{It is perhaps telling that Cantor didn't originally run the diagonal argument on a list of real numbers expressed as decimal expansions, as usually presented. He was working with sequences of the symbols $w$ and $m$ instead.}---which showed the set of functions $\NN\to \{m,w\}$ is uncountable---is the existence of the function $\{m,w\}\to \{w,m\}$ that sends each element to the other.
In Lawvere's abstract setting, it is possible to have structured objects $C$ with no such `free endomorphisms': given $\sigma\colon C\to C$, there always exists some element unmoved by $\sigma$. 
Lawvere's fixed-point theorem constructs this element, and the diagonal argument is then the contrapositive of the fixed-point theorem.

Yanofsky \cite{Yanofsky_03} gave an extensive survey of Lawvere's pair of results, showing how a great many examples from the literature follow as special cases, for example Cantor's original theorem on cardinalities, Russell's paradox, G\"odel's first incompleteness theorem,\footnote{Gromov \cite[\S2.1]{Gromov_ergosys} gives an independent treatment of G\"odel's theorem making it obviously a special case of Lawvere's result, with the comment: ``The childish simplicity of the proof of G\"odel's theorem does not undermine its significance''.} Tarski's theorem on the non-definability of a truth predicate, the Halting problem,\ldots. 
There are hints by Yanofsky, however, that Lawvere did not state the results in the most generality, even in subsequent iterations \cite[Session 29.2]{Lawvere-Schanuel} that weakened an assumption.
Taking such hints seriously, and looking for the actual minimum assumptions needed for Lawvere's proof to apply, we find that even the basic assumption of cartesian products in all existing treatments is not needed.
As a result, certain diagonal-type arguments (and fixed-point results) can be constructed that escape capture by Lawvere's formulation.

\begin{example}
Consider the following setup. We are dealing with countably infinite sets, and only considering functions $f\colon A\to B$ that are finite-to-one: for each $b\in B$, there are only finitely many $a\in A$ with $f(a)=b$.
The cartesian product of countable sets is of course again countable\footnote{Assuming for this example the axiom of choice.}, and we can define for every set $A$ a diagonal map $A\to A\times A$ sending $a\mapsto (a,a)$. Importantly, the projection maps $A\times A\to A$, $(a_1,a_2) \mapsto a_i$ are \emph{not} finite-to-one, which implies that the  product here is \emph{not} the categorical product assumed to exist by the statement of Lawvere's diagonal argument. This setup describes a category with a notion of product, specified in more detail below.

Yet \emph{a} diagonal argument still works in this setting. Consider for simplicity a finite-to-one function $F\colon A\times A \to \NN$. And then the finite-to-one function $A\to \NN$, $a\mapsto F(a,a)+1$, is not equal to $F(a',-)\colon A\to \NN$ for any $a'\in A$.
As a result, no such finite-to-one function $F$ can capture all possible finite-to-one functions $A\to \NN$.
\end{example}

So what should we do? Try to embed this example in a setting where Lawvere's result applies verbatim, and then argue back down to the desired statement? 
In the author's view, this approach may be limiting, especially when working internal to a given ambient category with a weak internal logic; having a direct argument can be preferable to relying on an embedding theorem.
One can examine the proof and extract the relevant definitions that make the result work, as is done in this article, albeit in several directions at once, and for Lawvere's fixed-point theorem as well.

\paragraph{Outline} We begin the rest of the paper by giving a more in-depth introduction in \S1 to the cloud of ideas from which this paper coalesced, and touch on the relation to substructural logic.
This is followed by two sections that together are an exploration of the categorical framework the results of the paper will rely on: the first of these, \S2 gives the required definitions, followed by \S3 with a collection of nontrivial examples that are genuinely weaker than the cartesian framework used by Lawvere.
The next two sections (\S\S4--5) cover the diagonal argument and the fixed point theorem in this initial minimalist setting.
Then \S6 re-examines Lawvere's result in light of Yanofsky's choice to only treat cartesian categories of structured sets, and find that Lawvere's version is ultimately no more general than Yanofsky's.
In \S7 the results then turn to discussing the fixed-point theorem \emph{internal} to a regular category; the results preceeding this have all been from an \emph{external} point of view. In this section the fixed-point theorem is written out in the relevant Kripke-style semantics in the \emph{regular logic} fragment. This is not as weak as the other types of logics earlier in the paper, but the proof indicates that not all the structural rules are strictly necessary.
The penultimate section, \S8, considers a substructural setting analogous to lambda calculus, where one can build fixed-point combinators on function types, with the additional richness of allowing a necessity-type modal operator that seems to capture a fragment of allowing access to an oracle. This last interpretation is inspired by the `flat' modality from \emph{crisp type theory} and its use in the proof of Brouwer's fixed point theorem using `real cohesion'. I do not claim to be able to unify these two classes of fixed point theorem here, however.
Lastly, a short \S9 returns full circle to give an analogue of a fixed-point combinator for a reflexive object modelling $\lambda$-calculus, albeit in the much weaker substructural setting set up at the beginning.

\section{More technical introduction} 
\label{sec:detailed_introduction}

Lawvere's fixed-point theorem \cite{Lawvere_69} 
is a result about cartesian closed categories whose contrapositive is an abstract, categorical version of Cantor's diagonal argument.
It says that if $A \to Y^A$ is surjective on global points---every $1\to Y^A$ is a composite $1\to A\to Y^A$---then for every endomorphism $\sigma\colon Y\to Y$ there is a fixed (global) point of $Y$ not moved by $\sigma$.
However, Lawvere goes further and observes that the proof of his theorem does not really use the \emph{closed} structure affording the `internal hom' objects $Y^A$, the analogues of function sets.
The internal hom only appears in the theorem \emph{statement}; the first step in the proof is to uncurry a given map $A\to Y^A$ to the corresponding $A\times A\to Y$, from which the fixed point is constructed.
It should be noted that the diagonal copy of $A$ inside $A\times A$ plays a crucial r\^ole in the construction.
Thus with appropriate unwinding of the definitions involved, the theorem makes sense in an arbitrary category with finite products: a \emph{cartesian} category \cite[Session 29.2]{Lawvere-Schanuel}. 
This point of view is taken by Yanofsky \cite{Yanofsky_03}, where other small variations are also made to the setup, but still in the setting of a cartesian category.

On the face of it, this seems like the ultimate version of the diagonal argument: one needs the terminal object $1$ (a $0$-ary product) to talk about global points, and the binary products to talk about the function $A\times A \to Y$ to which diagonalisation is applied.
However, closer inspection of the proof makes it clear that it never uses the projection maps $\pr_1,\pr_2\colon A\times A\to A$ that characterise cartesian categories among the more general \emph{monoidal} categories.
Further, the universal property of the product is never used aside from ensuring the existence of the natural diagonal map $A\to A\times A$.

Thus one might ask if the proof makes sense in a more general monoidal category, with additional, explicitly-specified data playing the r\^ole of the diagonal maps. This is mentioned by Abramsky and Zvesper \cite[footnote 2]{Abramsky_Zvesper}, for example.
It does indeed, and Lawvere's proof in the cartesian case applies, practically verbatim, to more general monoidal categories with diagonals.
A second, even closer, inspection shows that the monoidal coherence structure, the data consisting of the isomorphisms $(A\otimes B)\otimes C \simeq A\otimes (B\otimes C)$ and so on, is never used.
Similarly, the first instinct is to replace the terminal object $1$ with the more general monoidal unit $I$, since the terminal object is the monoidal unit in the cartesian case. 
But the proof uses no monoidal properties of $I$ whatsoever, and so one can just fix an arbitrary object $t$ to play the domain of `generalised elements' $t\to A$.\footnote{For instance one can consider the category of partial functions $\NN \rightharpoonup \NN$, where there is no terminal object, and take $t=\NN$.} 
In this note we will see what seems like the minimal structure that supports both of Lawvere's results, together with variations on the theme of working with weaker assumptions.

It is interesting to compare the categorical framework here to the combinatory logic of Sch\"onfinkel and Curry.
One can consider combinators as operators on the set of $\lambda$-expressions\footnote{Technically, they are themselves \emph{closed} $\lambda$-expressions, and they operate via application.}, and a \emph{fixed-point combinator} $f$ satisfies
\[
    x(f x) = f x
\]
for all $\lambda$-expressions $x$---the expression $f x$ is a fixed-point of $x$.
The most famous of these is probably the \emph{$Y$-combinator}.
There are various small sets of combinators that are known to be able to generate all possible combinators/terms, for instance $S$, $K$ and $I$, generating the so-called $\mathit{SKI}$-calculus. 
But there are other options, and various constructions of fixed-point combinators from them.
More interesting for our purposes is the possibility of looking at combinators that do not generate the full $\mathit{SKI}$-calculus, but which are still enough to give a fixed-point combinator. 

Smullyan \cite{Smullyan_85} raised the question whether it was possible to do so starting only from the combinators $B$ and $W$, given by
\begin{align*}
((Bx)y)z &= x(yz)\\
(Wx)y &= (xy)y,
\end{align*}
where $x$ and $y$ are $\lambda$-terms, amounting to having composition and diagonals respectively. 
This problem was solved by Statman (reported by McCune and Wos \cite{McCune-Wos_87}), giving the explicit fixed-point combinator $f := (B(WW))((BW)((BB)B))$.

\begin{remark}
There is a hierarchy of substructural logics (eg \cite{Ono_90}), each of which corresponds to a particular choice of basis combinators. 
Relevance logic according to \emph{op.\ cit.} corresponds to $\mathit{BCWI}$-logic, linear logic corresponds to $\mathit{BCI}$-logic, and ``ordered logic'' to $\mathit{BI}$-logic.
Statman's combinator shows that one can construct a fixed-point combinator in $\mathit{BW}$-logic, which corresponds to a system called ``full Lambek with contraction'' $\mathit{FL}_c$, though one might also call it ``ordered logic with contraction''.
It is somewhat remarkable that $\mathit{FL}_c$ is undecidable \cite{Chvalovsky_Horcik_16},
 and is very nearly the minimal substructural logic for which this is true.\footnote{The result of Chvalovsk\'{y} and Hor\v{c}\'{i}k is about the `positive fragment' of $\mathit{FL}_c$.} 
The proof here of the fixed-point theorem from a semantic/categorical point of view is then intriguing, given that it suffices to prove incompleteness results \cite{Yanofsky_03}.
\end{remark}

\paragraph{Acknowledgements}  After I announced preliminary results from this note online, David Jaz Myers shared some talk slides outlining a graphical proof of the fixed-point theorem that continues to work in a monoidal category with sufficient diagonals. 
Thanks also to Greg Restall for answering questions on Twitter about relevance logic, and Rongmin Lu for extensive discussions and feedback.

\section{Products in a general setting}

This section serves to give the main technical definitions needed for the paper; the next section will give some examples.
As noted above, the projection maps afforded by finite products are not used in the proof of the diagonal argument or the fixed-point theorem, nor associativity up to isomorphism.
The only structure that appears to be needed is the following:

\begin{definition}{(Davydov \cite{Davydov_07})}
A \emph{pointed magmoidal category}
is a category $\CC$ equipped with a functor $\# \colon \CC\times \CC\to \CC$ and a chosen object $t$.
A magmoidal category is equipped with \emph{diagonals} when it is equipped with a natural transformation $\delta\colon \id_C \Rightarrow \#\circ \Delta_\CC$. 
\end{definition}

Another way to phrase the structure of diagonals on a magmoidal category is that every object $X$ is equipped with the structure of an \emph{internal comagma} (an arbitrary binary co-operation morphism $X\to X\# X$), and such that every morphism $X\to Y$ of the category is a map of comagmas.
The most general statement to be considered in this note will not need the full structure of diagonals on the magmoidal category, which can be considered as a global structure on $\CC$.
Instead, one can ask for local structure, namely that $t$ is equipped with a comagma structure, and the results concern a comagma object $A$ such that every morphism $t\to A$ is a comagma map.
This is a reasonable generalisation as there are magmoidal categories of interest, for instance monoidal categories with (non-natural) diagonals as described by Selinger \cite{Selinger_99}, where requiring natural diagonals for \emph{all} objects (known as ``uniform cloning'' in the setting of categorical quantum mechanics) leads to strong constraints on the monoidal structure (eg \cite{Abramsky_10}, in the closed setting).

For the purposes of section~\ref{sec:uniform} below, a little extra structure is needed.

\begin{definition}
Fix a magmoidal category $(\CC,\#)$ and a pair of objects $X,Y$. An \emph{internal hom object} $Y^X$ is a representing object for the functor $\CC(-\#X,Y)\colon \CC\to \Set$. Concretely, this means there is an object $Y^X$, an \emph{evaluation map} $\ev\colon Y^X\#X\to Y$ such that the assignment
\[
    (W\xrightarrow{f} Y^X) \mapsto (W\#X \xrightarrow{f\#\id} Y^X\#X \xrightarrow{\ev} Y)
\]
defines a natural isomorphism.
If we take $X=Y$, then $X^X$ will be called the \emph{internal endomorphism object}.
\end{definition}

One way to get such an internal hom object is to demand the magmoidal structure is (right) closed: for every object $X$ the functor $(-)\#X\colon \CC\to \CC$ has a right adjoint $(-)^X\colon \CC\to\CC$. 
This notion of closed magmoidal category was discussed with a view towards functional programming by Milewski \cite{Milewski_blog} under the name \emph{magmatic category}, and again by Wijnholds \cite{Wijnholds_17}, in the context of getting a graphical calculus for \emph{biclosed} magmoidal categories, in the style of Baez and Stay \cite{Baez-Stay_Rosetta}.

\section{Examples of magmoidal categories}\label{sec:examples}

Aside from cartesian monoidal categories, those where the product is the categorical product, there are a number of nontrivial examples, both of a general nature, and arising from the literature.
The rough idea is that one should be looking at categories that are able to give a semantic interpretation for some form of substructural type theory (eg \cite{Walker_05}), where one is not insisting on the Exchange or Weakening rules.

Such categories can be seen as generalising the algebraic semantics of relevance logic \cite{Restall_00}, for instance Church monoids \cite{Meyer_72} or Ackermann groupoids\footnote{That is, magmas, not categories where all morphisms are invertible.} with the additional rule $a \leq a\cdot a$ \cite{Meyer-Routley_72}.
Here $a\cdot b$ interprets intensional conjunction (or \emph{fusion}) in relevance logic, so one could see the product in a magmoidal category as giving a refinement, \`a la Curry--Howard propositions-as-types, to richer interpretations of relevant types.
Church monoids and Ackermann groupoids with diagonal give rise to partially ordered sets with an additional magma operation, hence magmoidal categories with diagonals, with at most one morphism between any two objects. 
The magmoidal categories arising from Church monoids are additionally closed, coming from residuation.

However, the partial orders that form algebraic semantics as studied in the relevance logic literature are a little too degenerate for our purposes.
The relevance monoidal categories of Do\v{s}en and Petri\'{c} \cite{Dosen-Petric_07} give examples of magmoidal categories with diagonal, though they are much more restrictive (cf also \cite{Szabo_78,Vasyukov_11} and \cite[\S 3.2.2]{Mere_phd}).
There is also ongoing work linking Dialectica-type categories to relevance logic that should form an example \cite{dePaiva_18,dePaiva_20}, though currently details are still forthcoming.

We mention in passing that arbitrary monoidal categories---and even skew monoidal categories \cite{Szlachanyi}---with diagonals are of course examples.

Now to a list of more concrete examples and constructions, to help anchor the reader's intuition.

\begin{example}
Consider a category with finite products, for instance the category of sets, or even just the category of countable sets.\footnote{In an ambient set theory without Choice, one should instead take \emph{counted} sets---those with a bijection with a subset of $\mathbb{N}$---otherwise finite products could fail to exist.}
Then the subcategory consisting of all the objects but just the monomorphisms---e.g.\ the category of (countable) sets and injections---with the cartesian product is magmoidal with diagonals.
The only projections that are guaranteed to exist are those mapping out of products with the terminal object.
\end{example}

Given the hovering presence of relevance logic, sets and injections seem like they capture some intuition about implications not being allowed to delete information.
However, this vague analogy will not be developed futher here.
If a given \emph{monoidal} category with diagonals always has the diagonal maps being monomorphisms, then this construction can be generalised to that case.

\begin{example}
Given a regular cardinal $\kappa$, a function $f\colon X\to Y$ between sets with $|f^{-1}(y)|< \kappa$ is called \emph{$\kappa$-small}. 
The category of sets and $\kappa$-small functions with the cartesian product of sets is magmoidal with diagonals, but the product is not the categorical product, as the projections are not generally $\kappa$-small.
For instance, $\aleph_0$-small functions are those where the preimage of any element in the codomain is finite.
One can think of such functions as only forgetting a `small' amount of information in their domain.
\end{example}

\begin{example}
The category of pointed sets and functions, with \emph{smash product}, is monoidal with diagonals.
Recall that the smash product of pointed sets $(X,x_0) \wedge (Y,y_0)$ is defined to be the quotient of $X\times Y$ by the equivalence relation where we declare all elements of the form $(x_0,y)$ and $(x,y_0)$ to be equivalent, for arbitrary $x$ and $y$.
The diagonal map for $(X,x_0)$ is given by the composite $(X,x_0)\to (X\times X,(x_0,x_0)) \to (X\wedge X,[x_0,x_0])$.

Recall that the category of pointed sets and pointed (total) functions is equivalent to the category $Set_{\mathrm{partial}}$ of sets and partially-defined functions, whereby the functor $Set_{\mathrm{partial}} \to Set_*$ takes a set $X$ and adds a new element, $X\sqcup \bot_X$, to be the base point, and for a partial function $f\colon X\to Y$ this functor sends all elements outside of the domain of $f$ to $\bot_Y$ (including $\bot_X$).
\end{example}

For a reasonably sophisticated example, consider the category of complete lattices (that is, posets with suprema of all subsets) with strict morphisms:  those preserving the bottom element and suprema of all directed subsets \cite{Jacobs_93}.
Another is the category of `worlds' in the sense of O'Hearn--Power--Takeyama--Tennent \cite{OPTT_95}, which is monoidal with diagonals \emph{and} projections---but not cartesian, as diagonal followed by either projection is not the identity morphism.

\begin{example}
Consider a symmetric monoidal category $(\CC,\otimes,I)$ (for instance that of vector spaces), and the category $\mathbf{CoSemigrp}_{\mathrm{cocom}}(\CC)$ of cocommutative cosemigroup\footnote{Recall that this means being equipped with a coassociative comultiplication, but no counit object in general.} objects in $\CC$.
This is monoidal, and the comultiplication map $\Delta\colon A\to A\otimes A$ is a map of cosemigroups.
Further, every map of cosemigroups $A\to B$ turns out to be compatible with comultiplication\footnote{The proof is the same as in the case of commutative comonoids in a symmetric monoidal category.}, and thus $\Delta$ is natural as needed. 
\end{example}

The category of cocommutative \emph{comonoids}\footnote{Hence with a two-sided counit.} in a symmetric monoidal category is cartesian monoidal, but removing the counital axiom means we can get a more general monoidal category, while retaining the diagonals.

The last general construction is one that starts from a monoidal category with diagonals, for instance a category with finite products.

\begin{example}
Suppose $T\colon \CC\to \CC$ is a pointed endofunctor equipped with a natural transformation $\iota\colon \id_{\CC} \Rightarrow T$, on a monoidal category $(\CC,\otimes)$ with diagonals (for instance a category with finite products). 
We can define a new magmoidal product ${}_T\#$ by $A{}_T\# B := T(A)\otimes B$.
This still has diagonals, since we can use $\iota_A$ to get $A\to A\otimes A\to T(A)\otimes A$.

Similarly one has a new magmoidal product $\#_T$, given by $A\#_T B := A\otimes T(B)$, which also has diagonals.
\end{example}

As a concrete example, one can consider the slice category $\Set/X$, together with the functor sending an object $(A\to X)$ to $(\pr_2\colon A\times X \to X)$.
The product on $\Set/X$ is the categorical product, given here by $(A\to X)\times (B\to X) :=(A\times_X B\to X)$
Then the new, magmoidal product is, on objects, $(A\to X){}_T\# (B\to X) = (A\times B\xrightarrow{\pr_2} B \to X)$.
There are diagonals, given by the usual diagonal $A\to A\times A$, taken as morphisms over $X$.
This example is even magmoidal closed, where the corresponding internal hom $(C\xrightarrow{g} X)^{(B\xrightarrow{f} X)}$ is given by
\[
     \pr_2\colon \hom_X(B,C)\times X\to X
\]
where $\hom_X(B,C)$ is the set of functions $B\to C$ over $X$.\footnote{Compare to the usual cartesian closed structure, in which the internal hom is the \emph{dependent product}, given by the obvious projection $\bigsqcup_{x\in X} g^{-1}(x)^{f^{-1}(x)} \to X$.}

Another variation is the full subcategory with objects the \emph{surjective} functions $A\onto X$.
Yet another is to take this variation and restrict to the injective functions over $X$.

A different example is given by the category of pointed sets with smash product, where the endofunctor $T$ is taken to be (on objects) $(X,x_0) \mapsto (X\sqcup \{\bot_X\},\bot_X)$.
Then the new magmoidal product is
\[
    (X,x_0)\#_T (Y,y_0):= (X\times Y)/((x_0,y)\sim (x_0,y'))
\]
This is magmoidal closed, where one takes the internal hom $(Y,y_0)^{(Z,z_0)}$ to consist of \emph{all} functions, and be pointed by the function constant at $z_0$.

I hope it is clear from this bevy of examples there are plenty of nontrivial instances to play with.

\section{The diagonal argument in the magmoidal setting}\label{sec:magmoidal_diagonal}

For the following we shall fix a pointed magmoidal category with diagonals $(\CC,\#,\delta,t)$.
In the cartesian setting, a map $A\times B \to C$ can be viewed as an $A$-parameterised family of maps $B\to C$; in the cartesian closed case this is of course equivalent to a map $A \to C^B$.
In our setting of a magmoidal category, we still want to think of a morphism $F\colon A\# B \to C$ as being an $A$-parametrised family of maps $B\to C$.

How does one see this?
Given any $a\colon t\to A$, hence supposedly picking out one such map, then for any $b\colon t\to B$ we get 
\[
F(a,b)\colon t\xrightarrow{\delta} t\# t \xrightarrow{a\# b} A \# B \xrightarrow{F} C\,.
\] 
So, if we think of $a$ as being an element of $A$, then we have an assignment of elements $b\mapsto F(a,b)$.

We can ask whether, for a given $A$, one can find \emph{every} map $B\to C$ inside some given $A$-parametrised family $A\# B \to C$, up to equality defined by considering $t$-points; this is a form of \emph{observational equivalence} relative to the specified object $t$.

\begin{definition}
In any magmoidal category with diagonals $(\CC,\#,\delta,t)$, a map $F\colon A\#B\to C$ is an \emph{incomplete parametrisation} of maps $B\to C$ if there exists an $f\colon B\to C$ such that for all $a\colon t\to A$, there is some 
$b \colon t\to B$ with $f\circ b \not= F\circ(a\# b)\circ \delta\colon t\to C$.
\end{definition}

\noindent

Finally, let us say that an endomorphism $\sigma\colon C\to C$ in a category is $t$-\emph{free} if for all $c\colon t\to C$, $\sigma\circ c \not=c$.
Notice that if there is a $t$-free endomorphism, then $t$ is not an inital object. 

\begin{theorem}\label{thm:diag_thm}
  For $(C,\#,\delta,t)$ a pointed magmoidal category with diagonals, and $\sigma \colon C\to C$ a $t$-free endomorphism, every $F\colon A\# A \to C$ is an incomplete parametrisation of maps $A\to C$.
\end{theorem}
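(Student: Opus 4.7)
The plan is to adapt Lawvere's classical diagonal construction essentially verbatim, relying only on the naturality of $\delta$ and the $t$-freeness of $\sigma$. First, I would take as the candidate witness the map
$$f := \sigma \circ F \circ \delta_A \colon A \longrightarrow C,$$
which intuitively sends a generalised element $a\colon t\to A$ to $\sigma$ applied to ``$F(a,a)$''. This is exactly the ``twisted diagonal'' combinator from the cartesian case, and it is the only move in the proof that actually uses the data $\sigma$, $\delta_A$ and $F$ together.

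Next, given any $a\colon t\to A$, the claim is that the single choice $b := a$ already witnesses incompleteness, that is, $f\circ a \neq F\circ (a\# a)\circ \delta_t$. To see this I would invoke naturality of $\delta\colon \id_\CC \Rightarrow {\#}\circ \Delta_\CC$ at the morphism $a\colon t\to A$, which gives
$$\delta_A \circ a \;=\; (a\# a)\circ \delta_t,$$
and then compute
$$f\circ a \;=\; \sigma \circ F \circ \delta_A \circ a \;=\; \sigma\circ \bigl(F\circ (a\# a)\circ \delta_t\bigr).$$
If the desired inequality failed, then writing $c := F\circ (a\# a)\circ \delta_t \colon t\to C$ we would have $\sigma\circ c = c$, contradicting the assumption that $\sigma$ is $t$-free. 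Hence $f$ is an incomplete parametrisation, as required.

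I do not anticipate a genuine obstacle: the whole argument only uses composition, functoriality of $\#$ (to form $a\# a$), naturality of one component of $\delta$, and the hypothesis on $\sigma$. The mild subtlety, and the main reason to state this plan carefully, is to verify that no sneaky use of a projection, an associator, a unitor, or the universal property of a product has crept in; all three ingredients (the definition of $f$, the use of naturality, and the contradiction with $t$-freeness) are at the level of \emph{individual morphisms}, so the sparseness of a pointed magmoidal category with diagonals causes no trouble. As a sanity check, note that the existence of a $t$-free $\sigma$ forces $t$ to be non-initial, so there is at least one $a\colon t\to A$ to quantify over in any nontrivial instance of the statement.
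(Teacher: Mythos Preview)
Your proposal is correct and matches the paper's proof essentially verbatim: define $f=\sigma\circ F\circ\delta_A$, take $b=a$, invoke naturality of $\delta$ at $a$, and conclude via $t$-freeness of $\sigma$. The only cosmetic difference is that the paper packages the computation into a single (non-commuting) diagram rather than writing out the contradiction explicitly.
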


\begin{proof}
Define $f = \sigma\circ F\circ \delta_A$. 
Then for all $a\colon t\to A$, we can take $b=a$, in the definition of incomplete parametrisation.
This gives the following (non-commutative!) diagram.
\[
\xymatrix{
&A \ar[r]^-\delta  & A\# A  \ar[r]^-F  & C\ar[d]^\sigma \\
t\ar[r]_\delta \ar@{=}[d] \ar[ur]^a  &t\#t \ar[ur]_{a\# a} \ar@{}[dr]^\neq& & C \\
t\ar[r]_\delta & t\#t \ar[r]_-{a\# a} & A\#A \ar[ur]_F
}\qedhere
\]
\end{proof}

This recovers the diagonal argument given by Lawvere, if we take $\CC$ to be a cartesian monoidal category, and $t=1$, the terminal object.

\begin{remark}
The argument of course is extremely `local', in that the only instance of naturality for $\delta$ needed is that all morphisms $t\to A$ are comagma morphisms, thinking of the diagonal maps as the structure of an internal comagma.
It is conceivable that $A$ is a special kind of object in a category where diagonals don't exist for all objects.
\end{remark}

There is a further variation, introduced by Yanofsky \cite{Yanofsky_03}, where one has an auxiliary map $p\colon A\to B$ with a section $s\colon B\to A$.

\begin{theorem}
In the situation of Theorem~\ref{thm:diag_thm}, given a morphism $p\colon B\to A$ with a section $s\colon A\to B$ (so that $p\circ s=\id_A$), every $F\colon A\# B \to C$ is an incomplete parametrisation of maps $B\to C$.
\end{theorem}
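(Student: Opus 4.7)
The plan is to imitate the proof of Theorem~\ref{thm:diag_thm}, using the section $s$ to turn each element $a\colon t\to A$ into a candidate witness $b\colon t\to B$, and using $p$ to build a map $B\to C$ that plays the role the map $\sigma\circ F\circ \delta_A$ played before. The retraction identity $p\circ s = \id_A$ is what allows this bridging to work.

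Concretely, I would define $f\colon B\to C$ by $f := \sigma \circ F\circ (p\#\id_B)\circ \delta_B$ and, for a given $a\colon t\to A$, take $b := s\circ a\colon t\to B$. The main step is then a short calculation. Since $b$ is a morphism $t\to B$, the local naturality of $\delta$ (as discussed in the remark after Theorem~\ref{thm:diag_thm}) gives $\delta_B\circ b = (b\# b)\circ \delta_t$. Combined with $(p\#\id_B)\circ (b\# b) = (p\circ s\circ a)\#(s\circ a) = a\# b$, where $p\circ s = \id_A$ is used in the first slot, this yields $f\circ b = \sigma\circ F\circ (a\# b)\circ \delta_t$. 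Applying $t$-freeness of $\sigma$ to the element $F\circ (a\# b)\circ \delta_t\colon t\to C$ then gives the required inequality $f\circ b \neq F\circ (a\# b)\circ \delta_t$, so $F$ is an incomplete parametrisation with witness $f$. The argument can be displayed as a non-commutative pentagon exactly analogous to the one in the proof of Theorem~\ref{thm:diag_thm}, with the extra leg $s\circ a$ on the $B$-side and the map $p\#\id_B$ closing the $A$-side up.

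The only real point to watch is to keep the argument local: neither $s$ nor $p$ is assumed to be a comagma map, so the naturality of $\delta$ must only be invoked at the $t$-point $b$, and never at $s$ or $p$ themselves. Routing the diagonal through $\delta_B$ (rather than, say, trying to push it past $s$) and using $p\circ s = \id_A$ to match up the $A$-slot avoids ever needing such extra compatibility, and the result goes through under exactly the same minimal hypotheses as Theorem~\ref{thm:diag_thm}.
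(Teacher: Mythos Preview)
Your proof is correct and follows the same route as the paper: the same witness $f=\sigma\circ F\circ(p\#\id_B)\circ\delta_B$ and the same choice $b=s\circ a$. The one small difference is that the paper's diagram invokes naturality of $\delta$ twice---once at $a\colon t\to A$ and once at $s\colon A\to B$---whereas you apply it a single time at the composite $b=s\circ a\colon t\to B$; as you observe, this means your version only needs naturality of $\delta$ at $t$-points, so it already works under the weaker local hypothesis of the Remark following Theorem~\ref{thm:diag_thm}, while the paper's argument as written does use that $s$ is a comagma map.
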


\begin{proof}
In the definition of incomplete parametrisation, we take $f\colon B\to C$ to be the composite $F\circ (\id \# p)\circ \delta_A \circ s$ and for a given $a\colon t\to A$, we take $b=s\circ a$:
\[
    \xymatrix{
        B \ar[r]^-{\delta_B}  & B\# B \ar[r]^{p\#\id} & A\# B\ar[r]^-F  & C\ar[d]^\sigma \\
        A\ar[r]|-{\delta_A} \ar[u]^{s}  &A\#A \ar[u]^{s\# s} \ar[ur]_{\id\# s}="s" & & C \\
        t\ar[u]^a \ar[r]_-{\delta_t} & t\#t \ar[u]^{a\#  a} \ar[r]_-{s\circ a\# a} & A\#B \ar[ur]_F^{\ }="t"
        \ar@{}"s";"t"|{\neq}
    }
\]
where we have used naturality of $\delta$ twice, with respect to $a$ and $s$.
\end{proof}

\section{The fixed-point theorem in the magmoidal setting}\label{magmoidal_FPT}

We start with a version of the fixed-point theorem in the setting of the previous section, but will go on to prove some more variations below.

Yanofsky points out \cite[Remark 5]{Yanofsky_03} that one does not \emph{need} the global quantifier $\forall f\colon A\to B$ as Lawvere has in his definition of `weakly point surjective'---the proof requires quantifying over only those morphisms that factor as $F\circ \delta\colon A\to B$ followed by some $B\to B$.
Here we go one step further and remove this quantifier altogether, since the construction of the fixed point doesn't require universal quantification over any such collection of functions.

\begin{theorem}
Let $(\CC,\#,\delta,t)$ be a pointed magmoidal category with diagonals and $F\colon A\#A\to C$ and $\sigma\colon C\to C$ be maps such that
\begin{align*}
&\exists a_0\colon t\to A\\
&\forall a\colon t\to A\\
&\sigma\circ F\circ \delta_A\circ a = F\circ (a_0\#a)\circ \delta_t.
\end{align*}
Then the map $c:= F \circ \delta\circ a_0\colon t\to C$ satisfies $\sigma\circ c = c$.
\end{theorem}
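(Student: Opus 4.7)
The plan is to specialize the universal hypothesis to the distinguished point $a_0$ and then identify both sides of the resulting equation with $c$, using only naturality of $\delta$ at $a_0$.

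First, I would substitute $a = a_0$ into the assumed equation to obtain
\[
\sigma \circ F \circ \delta_A \circ a_0 \;=\; F \circ (a_0 \# a_0) \circ \delta_t.
\]
By the definition $c := F \circ \delta_A \circ a_0$, the left-hand side is exactly $\sigma \circ c$, so it remains to show the right-hand side equals $c$ as well.

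Second, I would invoke naturality of $\delta$ at the morphism $a_0 \colon t \to A$, which gives the commuting square
\[
\delta_A \circ a_0 \;=\; (a_0 \# a_0) \circ \delta_t.
\]
Applying $F$ to both sides turns the right-hand side of the specialized hypothesis into $F \circ \delta_A \circ a_0 = c$, so $\sigma \circ c = c$ as desired. The argument can be displayed as a single small diagram essentially dual to the one in the proof of Theorem~\ref{thm:diag_thm}, with the non-commuting cell replaced by a commuting one witnessing the fixed-point identity.

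There isn't really a main obstacle here — the theorem is designed so that the only non-trivial input is the specialization of the hypothesis together with naturality. The one point worth flagging is the ``locality'' remark following Theorem~\ref{thm:diag_thm}: the proof only uses that the \emph{specific} morphism $a_0$ is a comagma map (i.e.\ the naturality square for $\delta$ commutes at $a_0$), rather than full naturality of $\delta$ on all of $\CC$. So the statement could in fact be phrased in the weaker, purely local setting where only $t$ and $A$ carry internal comagma structure and $a_0$ is assumed compatible with it.
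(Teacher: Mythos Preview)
Your proof is correct and is precisely the paper's argument: specialise the hypothesis at $a=a_0$ to obtain $\sigma\circ c = F\circ(a_0\#a_0)\circ\delta_t$, then use naturality of $\delta$ at $a_0$ to rewrite the right-hand side as $c$. The paper packages these two steps as the top and bottom rectangles of a single commutative diagram, but the content is identical, and your remark about locality matches the paper's own observation following the diagonal theorem.
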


One can think of this as saying: ``if there is some $a_0\in A$ such that $F(a_0,-)=\sigma\circ F\circ\delta_A$, then $\sigma$ has a fixed point'', and further, the fixed point is an explicit function of $a_0$.

\begin{proof}
Consider the following commutative diagram:
\[
  \xymatrix{
    t \ar@{=}[d] \ar[r]^{a_0} &A  \ar[r]^-{\delta_A}& A\# A \ar[r]^-F& C\ar[r]^\sigma&C \ar@{=}[d]  \\
    t \ar@{=}[d]\ar[r]^-{\delta_t} & t\#t \ar[r]^-{a_0\#a_0} &A\#A \ar@{=}[d]\ar[rr]^F&&C \ar@{=}[d]\\
    t \ar[r]_{a_0} & A \ar[r]_-{\delta_A} & A\#A\ar[rr]_F & & C 
  }
\]
where we have used the defining property of $a_0$ in the top rectangle, and naturality of $\delta$ in the bottom left rectangle.
\end{proof}

We can also do a version of the fixed-point theorem, taking into account the Yanofsky variation, involving the map $p\colon B\to A$. But here we do not need to assume $p$ is split, but merely that it is \emph{surjective on $t$-points}: every $a\colon t\to A$ lifts through $p$ to a $b_a\colon t\to B$ satisfying $p\circ b_a=a$.

\begin{theorem}\label{fixed_point_thm_Yanofsky_variation}
Let $(\CC,\#,\delta,t)$ be a pointed magmoidal category with diagonals, $p\colon B\to A$ a map that is surjective on $t$-points, and $F\colon A\#B\to C$ and $\sigma\colon C\to C$ be maps such that 
\begin{align*}
&\exists a\colon t\to A\\
&\forall b\colon t\to B\\
&\sigma\circ F\circ (p\# \id)\circ\delta_B\circ b = F\circ a\#b\circ \delta_t.
\end{align*}
Then for any lift $b_a\colon t\to B$ of $a$, the arrow $c := F\circ (p\# \id)\circ\delta_B\circ b_a\colon t\to C$ satisfies $\sigma \circ c=\sigma$.
\end{theorem}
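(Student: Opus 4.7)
The plan is to follow the template of the preceding fixed-point theorem, chasing the candidate $c = F\circ (p\#\id)\circ \delta_B\circ b_a$ through $\sigma$ and using the defining hypothesis once together with naturality of $\delta$ once to recover $c$ itself. (I read the displayed conclusion $\sigma\circ c = \sigma$ as a typo for $\sigma\circ c = c$, since we are constructing a fixed point of $\sigma$.)

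First I would use the assumption that $p$ is surjective on $t$-points to pick a lift $b_a\colon t\to B$ with $p\circ b_a = a$; existence is precisely this surjectivity hypothesis. Next I would apply the given equation to the particular choice $b = b_a$, which yields
\[
    \sigma\circ F\circ (p\#\id)\circ\delta_B\circ b_a \;=\; F\circ (a\#b_a)\circ \delta_t.
\]
The left-hand side is $\sigma\circ c$ by the very definition of $c$, so the only remaining task is to identify the right-hand side with $c$ itself.

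For that I would invoke naturality of $\delta$ on the morphism $b_a\colon t\to B$, which gives $\delta_B\circ b_a = (b_a\# b_a)\circ \delta_t$. Substituting this into the definition of $c$ and using functoriality of $\#$ together with $p\circ b_a = a$, one computes
\[
    c = F\circ (p\#\id)\circ (b_a\# b_a)\circ \delta_t = F\circ \bigl((p\circ b_a)\# b_a\bigr)\circ \delta_t = F\circ (a\# b_a)\circ \delta_t,
\]
which is exactly the right-hand side of the displayed equation. Combining the two equalities gives $\sigma\circ c = c$, as required. Diagrammatically this will look like the diagram in the previous theorem, with an extra vertical strip lifting $a$ to $b_a$ on the left, and a $(p\#\id)\circ \delta_B$ detour in place of $\delta_A$ on the right.

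The main obstacle, such as it is, lies in organising the chase so that naturality of $\delta$ is applied only at the one place it is genuinely needed, namely to the chosen lift $b_a$: everywhere else the argument is pure functoriality of $\#$ plus the hypothesis. Compared with the earlier split-$p$ Yanofsky-style fixed-point result, the point is that $p$ need not admit a section at all; having a lift of the specific element $a$ suffices, because the constructed fixed point $c$ only ever mentions $b_a$ and never asks for a lift of any other $t$-point of $A$.
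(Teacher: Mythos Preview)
Your proof is correct and matches the paper's own argument essentially step for step: apply the hypothesis at $b=b_a$, then use naturality of $\delta$ at $b_a$ together with $p\circ b_a=a$ to identify the right-hand side with $c$. The paper presents this as a four-row commutative diagram rather than an equational chain, and it makes the same remark that naturality is invoked only with respect to $b_a$; your reading of $\sigma\circ c=\sigma$ as a typo for $\sigma\circ c=c$ is also correct.
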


\begin{proof}
Given $a$ as in the hypotheses, take any lift $b_a$ of it.
Then the following diagram commutes
\[
    \xymatrix{
     B \ar[r]^-{\delta_B} & B \# B \ar[r]^{p\# \id} & A\# B \ar[r]^-{F} & C \ar[r]^\sigma &  C \ar@{=}[d]\\
    t\ar[u]^{b_a} \ar@{=}[d] \ar[r]^-{\delta_t} &  t\# t \ar[rr]^{a\# b_a} \ar@{=}[d] && A \# B \ar[r]^F \ar@{=}[d] &C \ar@{=}[d]\\
    t \ar@{=}[d] \ar[r]^-{\delta_t} & t\# t \ar[r]^-{b_a\# b_a} & B\# B \ar[r]^{p\# \id} \ar@{=}[d] & A\# B \ar[r]^-{F} &  C\ar@{=}[d] \\
    t \ar[r]_{b_a} & B\ar[r]_-{\delta_B} & B\# B \ar[r]_{p\# \id} & A\# B \ar[r]_-{F} &  C
    }
\]
proving the claim. Note that naturality of $t$ has only been used here with respect to $b_a$.
\end{proof}

\section{Reduction to a concrete category in the cartesian setting}\label{sec:concrete_reduction}

If one reads Yanofsky's survey \cite{Yanofsky_03} of results flowing from Lawvere's theorems, then the two results are stated outright in terms of sets and functions.
This is in stark contrast to Lawvere's austere, pure category-theoretic treatment, nowhere mentioning sets except in applications; Cantor's theorem on unboundedly-large cardinalities is a prototypical case.
Yanofsky is completely self-aware around this seeming lack of generality:

\begin{quote}
It is here that we get in trouble ignoring the category theory that is necessary. In the examples that we will do, the objects we will be dealing with have more structure than just sets and the functions between the objects are required to preserve that structure. \cite[Remark 4]{Yanofsky_03}
\end{quote}

Even so, there is still some distance between structured sets and structure-preserving functions, and a general cartesian category. 
However, if one examines what Lawvere's theorems say, it becomes apparent that they are actually theorems about the \emph{concrete quotient} of the given cartesian category.
This is because of the following construction.
Given a category $\CC$ and a fixed object $t$, the functor $\CC(t,-)\colon \CC\to \Set$ factors as a composite $\CC \xrightarrow{Q} \CC_{=_t} \xrightarrow{|-|} \Set$ where $Q$ is full and bijective on objects, and $|-|$ is faithful.
More concretely, we can define $\CC_\sim$ to have the same objects as $\CC$, and define a congruence $=_t$ on the morphisms of $\CC$, so that $f,g\colon X\to Y$ are equivalent precisely when, for all $x\colon t\to X$, $f\circ x = g\circ x$.
Thus we can see that $\CC$ is something like an \emph{extensional quotient}, except `elements' here means $t$-points: morphisms from the fixed object $t$.
This concept is more familiar in computer science, for instance, than classical mathematics, whereby different programs (considered as morphisms in some category, for instance) have the same output on all inputs. 
As a consequence of the construction of $\CC_{=_t}$, we have $\CC(t,X)\simeq \CC_{=_t}(Qt,QX)$ for all objects $X$ of $\CC$, and even stronger, $|-|=\CC_{=_t}(Qt,-)$ as functors.
It is a short exercise to check that if $\CC$ is a cartesian category, then $\CC_{=_t}$ is a cartesian category, and $Q$ and $|-|$ preserve finite products.
For a cartesian category, then there is a canonical concrete category, also cartesian, whose objects can be considered as sets with structure, and morphisms are functions preserving that structure. 
Even better, the underlying set of a product in this concrete category is the product of the underlying sets, a fact which doesn't hold in arbitrary cartesian concrete categories.

The statements of Lawvere's diagonal argument and fixed point theorem both only involve global points of cartesian categories (hence taking $t$ to be a terminal object $1$), and so are really statements about the concrete cartesian category $\CC_{=_1}$.
Thus the apparent lack of generality of Yanofsky's treatment, modulo the acknowledged reduced emphasis on sets-with-structure, is no real restriction at all.

The possibility of using generalised elements is not new. For example Mulry \cite{Mulry_89}, in working up to a Lawvere-style fixed-point theorem, also allows $t$-points instead of global elements, calling them \emph{$t$-paths}.\footnote{Thinking of the prototypical case $t=\NN$, and $\NN\to X$ as defining a `path' in $X$.} 
Specifically, Mulry is interested in the case that $t=\NN$ is a natural numbers object or similar, referring to a map $f\colon X\to Y$ being $t$-path surjective when $f_*\colon \CC(t,X)\to \CC(t,Y)$ is a surjective function. 
However, this concept is mostly just used in the background setup constructions, but not at the point when Lawvere's fixed-point theorem is invoked.
Here, working up to ``observational equivalence'' in Lawvere's setting reduces his arbitrary cartesian category to the case of sets-with-structure, where the forgetful functor to $\Set$ preserves products.

\section{The fixed-point theorem in the internal logic of a regular category}\label{sec:internal_logic_regular_cat}

The logical structure of the fixed point theorem is so limited that we can reason in the internal logic of a \emph{regular} category \cite{Butz_98}, which is the regular fragment of first-order logic, dealing with just conjunction $\wedge$, truth $\top$, and existential quantifiers $\exists$. 
Working in the internal logic of a category is a generalisation of working with Kripke semantics, namely a rich alternative semantics whereby existential quantifiers are instantiated by passing to a slice category via the pullback functor associated to a regular epimorphism.
As we have seen above, the fixed point theorem can be written in such a way so that there is one (bounded) existential quantifier, namely over the `set' of indices of maps. 
The data going into the theorem statement is a two-variable function and an endomorphism of its domain. 
The existential quantifier means passing up some regular epimorphism $t\onto 1$ and then getting a $t$-point.
The fixed point is then constructed algebraically from this, as before.

The following theorem is the translation\footnote{You can think of this as compiling a high-level description into a concrete statement in the language of categories.} of the fixed-point theorem into the internal logic of a regular category.

\begin{theorem}\label{thm:FPT_internal_reg_cat}
Let $\CC$ be a regular category, and fix a pair of morphisms $F\colon A\times A\to C$ and $\sigma\colon C\to C$. 
Assume there exists a morphism $a_0\colon t\to A$ such that  $!_t\colon t\onto 1$ is a regular epimorphism and the diagram\footnote{The top row of this rectangle can be considered as the map $F(a_0,-)$.} 
\[
  \xymatrix{
    t\times A \ar[r]^-{a_0\times\id_A} \ar[d]_{\pr_2} & A\times A \ar[r]^-F & C \\
    A \ar[r]_-{\delta_A} &A\times A \ar[r]_-F& C \ar[u]_\sigma
  }
\]
commutes.
Then the composite $c:=\sigma\circ F\circ \delta_A\circ a_0\colon t\to C$ satisfies $\sigma\circ c = c$.
\end{theorem}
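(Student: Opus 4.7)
The plan is to give a direct diagram chase, in the same algebraic style as the magmoidal fixed-point theorems of Section~\ref{magmoidal_FPT}. The regular epimorphism $!_t\colon t\twoheadrightarrow 1$ does not enter the construction itself; it is present only to interpret the internal-logic existential quantifier $\exists a_0$ in Kripke-style semantics, and once $a_0$ has been named the rest of the argument takes place in the ambient cartesian structure of $\CC$.

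First I would use the universal property of the binary product to form the pairing $\langle\id_t,a_0\rangle\colon t\to t\times A$. The two identities
\[
    \pr_2\circ\langle\id_t,a_0\rangle = a_0, \qquad (a_0\times\id_A)\circ\langle\id_t,a_0\rangle = \delta_A\circ a_0
\]
are immediate from the projection equations defining the pairing. Precomposing the hypothesised commutative square with $\langle\id_t,a_0\rangle$, the upper path reduces to $F\circ\delta_A\circ a_0$ and the lower path reduces to $\sigma\circ F\circ\delta_A\circ a_0$, so commutativity of the square yields the key equation
\[
    F\circ\delta_A\circ a_0 \;=\; \sigma\circ F\circ\delta_A\circ a_0.
\]
Applying $\sigma$ on the left to both sides produces $\sigma\circ F\circ\delta_A\circ a_0 = \sigma\circ\sigma\circ F\circ\delta_A\circ a_0$, which, with $c := \sigma\circ F\circ\delta_A\circ a_0$ as in the statement, is exactly $\sigma\circ c = c$.

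I do not anticipate any real obstacle: the argument is a short diagram chase. The only conceptual point worth emphasising is that the regular-category machinery is absorbed entirely into the Kripke-style reading of the hypotheses and conclusion, so that the algebraic construction of the fixed point proceeds exactly as in Section~\ref{magmoidal_FPT}, with the cartesian diagonal $\delta_A$ playing the role of the magmoidal diagonal of earlier sections and the naturality used there reducing here to a one-line computation with the pairing.
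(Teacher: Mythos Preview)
Your argument is correct and is essentially the paper's own proof. The paper factors your pairing as $\langle\id_t,a_0\rangle=(\id_t\times a_0)\circ\delta_t$ and packages the computation into a single commutative diagram, but the content is identical: precompose the hypothesised rectangle with this map to obtain $F\circ\delta_A\circ a_0=\sigma\circ F\circ\delta_A\circ a_0$, and then read off $\sigma\circ c=c$.
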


\begin{proof}
Consider the following commutative diagram:
\[
  \xymatrix{
    t \ar[d]_{\delta_t} \ar[rr]^{a_0} && A  \ar@{=}[d] \ar[r]^-{\delta_A} & A\times A  \ar[r]^-{F} & C \ar[r]^{\sigma} & C  \ar@{=}[d] \\
    t \times t \ar@{=}[d] \ar[r]_-{\id_t\times a_0} & t\times A \ar[d]^{a_0\times \id_A} \ar[r]_-{\pr_2} & A \ar[r]_-{\delta_A} & A\times A \ar[r]_-{F} & C \ar[r]_{\sigma} & C \ar@{=}[d] \\
    t \times t 
    \ar[r]_{a_0\times a_0} & A\times A \ar[rrrr]_F & &&& C\,. 
  }
\]
But as $\delta$ is a natural transformation, $(a_0\times a_0)\circ \delta_t = \delta_A\circ a_0$, so that the composite along the left and bottom edges is again $c = \sigma\circ F\circ \delta_A\circ a_0$, as required.
\end{proof}

If $\CC$ is well-pointed, in the sense that $1$ is regular-projective and a separator, then $\CC$ is concrete, and the regular epimorphism $t\onto 1$ has a section $1\to t$.
Then the $t$-point $a$ in the proof gives rise to a global point $1\to t \to A$, and we recover Lawvere's fixed-point theorem.\footnote{One question is whether one can do the diagonal theorem in a similar weak internal logic setting. Here it is trickier, since one has to quantify over all points of a `set' with an endomorphism (that is, the endo is free). But then there is no existential quantifier!}
However, the theorem as formulated like this is an unravelling of the direct translation of Lawvere's theorem into the internal language of $\CC$.

Notice that the fact the top left rectangle commutes depends only on the naturality of the projection morphism $\pr_2$ (here, with respect to $a_0$), and the identity $\pr_2\circ \delta =\id$.
In particular it doesn't use the \emph{left projection} $\pr_1$ at all, and so this is superfluous data.
We can then consider magmoidal categories with diagonal and \emph{right projection} only, where right projection means a system of natural maps $\pr_2\colon A\#B\to B$ such that $\pr_2\circ \delta = \id$. 
Such magmoidal categories do not need to be symmetric, and indeed, symmetry is not used in the proof of Theorem~\ref{thm:FPT_internal_reg_cat}.
If one has a magmoidal category with diagonal and right \emph{and} left projections, then the magmoidal structure is automatically cartesian.

\begin{example}
Recall that given a monoidal category $(\CC,\otimes)$ with a pointed endofunctor $T$, there is a magmoidal struture on $\CC$ defined by $A{}_T\# B := T(A)\otimes B$.
If the monoidal structure is cartesian, then, in addition to diagonals, the magmoidal structure has natural projection maps $\pr_2\colon A{}_T\# B\to B$.
\end{example}

One should compare this with the setting of ordered logic without Exchange, where one could postulate that Weakening is allowed only on one side of a multiplicative conjunction.

\begin{remark}
Abramsky and Zvesper \cite{Abramsky_Zvesper} give the Brandenburger--Keisler argument in epistemic game theory in a form suitable for the internal logic of a regular category; it is derived from a different generalisation of Lawvere's fixed point theorem for regular categories than what is considered in this note.
\end{remark}

\section{Getting a uniform construction of the fixed point}\label{sec:uniform}

If we are in a (right) closed magmoidal category with diagonals, then we can make the construction of the fixed point to be an actual higher-order function.
That is, if the existential in the fixed-point theorem is replaced by a morphism, $C^C \to A$, which we can interpret as sending an endomorphism $\sigma$ to the `index' $a_0\in A$ so that $\sigma\circ F_\delta\circ a = F\circ a_0\#a\circ \delta_t$. More generally, this could also depend on $F$.
The end result should be a function $C^C\to C$ returning the fixed point of each endomorphism. 

In fact we can be a little more flexible; it might be that there is a function picking the index $a_0$, but it is ``discontinuous'' or ``non-uniform'' in its argument.
This can be achieved by having the ambient category $\CC$ be equipped with a copointed endofunctor $\flat\colon \CC\to \CC$, whose definition is recalled below.
Such an endofunctor implements a necessity modality satisfying the `T' axioms\footnote{Written $\square$ in the modal logic literature, with axioms dating back to Feys \cite{Feys_37}, and independently von Wright \cite[appendix II]{vonWright_51}, there called `M'.}, crucially that one has a natural map $e_X\colon \flat(X)\to X$. 
The existence of this map is the analogue of the \emph{axiom of necessity}.
One should also compare with the treatment by Restall \cite{Restal_93} of modalities in the substructural setting.

\begin{definition}
Let $(\CC,\#)$ be a magmoidal category. An endofunctor $\flat\colon \CC\to \CC$ is \emph{copointed} if it is equipped with a natural transformation $e_X\colon \flat(X) \to X$, called the \emph{counit} of $\flat$.
\end{definition}

\begin{example}
Any \emph{idempotent comonad}, in particular Shulman's flat modality \cite{Shulman_18}, has an underlying counital endofunctor. 
Recall that in addition to the counit, an idempotent comonad has a natural isomorphism $m_X\colon \flat(X)\to \flat(\flat(X))$,\footnote{The natural maps $e$ and $m$ can be seen to be the analogues of the S4 necessity axioms.} such that
\begin{itemize}
    \item[(CA)] $m_{\flat X}\circ m_X = \flat(m_X)\circ m_X$
    \item[(CU)] $e_{\flat X}\circ m_X = \flat(e_X) \circ m_X = \id_{\flat(X)}$
\end{itemize}
Shulman \cite{Shulman_18} implements `crisp' variables in real-cohesive type theory using such a comonad (see also \cite{LOPS_18}).\footnote{The `flat modality' has also been implemented in the formal proof assistant Agda \cite{agda_flat}.}
\end{example}

One possibility is that $\flat=\id_{\CC}$, and $e_X$ is always an identity map in $\CC$ (likewise $m_X$, if it is present), so that Theorem~\ref{thm:uniform_FPT} incorporates as a degenerate case the purely uniform version.
For ease of reference, I will adopt Shulman's terminology and call a morphism $\flat(X)\to Y$ a \emph{crisp} morphism from $X$ to $Y$, or just \emph{crisp}.

\begin{example}
Given $\disc\colon \Top\to \Top$ the endofunctor that takes a topological space and returns the discrete space on the same underlying set. 
Then a crisp function $\disc(A)\to B$ is an arbitrary, possibly discontinuous function. 
This is the example that gives rise to the intuition behind crisp morphisms.
Similar examples are possible, when restricting to subcategories like that of manifolds.
\end{example}

While this example seems far from the realm of the computable, recall the situation in type theory where one considers for example computable functionals $(\NN \to \Bool) \to \Bool$ that act as if they were continuous maps, for the (non-discrete) product topology on the domain.\footnote{Recall that the function type $\NN\to \Bool$ corresponds under topological semantics to the product of $\NN$-many copies of $\Bool$.}
The interpretation of $\flat$ as returning a `discretised' type would then allow `non-computable' functionals.
More generally, one might profitably think of a term of type $\flat(A)$ as being a term of type $A$ but supplied by an \emph{oracle}, and a function $\flat(A)\to B$ as taking oracular inputs and returning terms of type $B$. 
The counit $\flat(A)\to A$ then tells us that if an oracle supplies a term of type $A$ then we have that term, but it's not computable.
An analogous example in a different flavour of substructural logic is the exponential (or `of-course') modality $!$ in linear logic, which is in most formalisms encoded in semantics by a comonad, but is at the very least given by a copointed endofunctor. 
Linear logic famously does not allow the Contraction structural rule (modelled semantically by diagonals), so is inherently `resource sensitive'---except if one has a function $!A\to B$, which models allowing arbitrary duplication of inputs from $A$.
Our semantic model of a magmoidal category with diagonals clearly allows some form of `resource duplication', if a weak one, so here the idea of $\flat$ is meant to model more `resources' in the form of an `oracle'.

Given the data of a copointed endofunctor, considered as a fragment of a comonadic modality, the index function can be $\flat(C^C)\to A$, so that $a$ is a \emph{crisp} function of $\sigma$, or the section of $p\colon B\to A$ could instead be $s\colon \flat(A)\to B$, with $p\circ s = e_A$, so that $b_a$ as in Theorem~\ref{fixed_point_thm_Yanofsky_variation} is a function of $a$, but a \emph{crisp} one.
In this setting, we get a morphism $\flat(C^C)\to C$ that can be seen as sending an endomorphism to its fixed point, but in a crisp way.
This is reminiscent of the version of Brouwer's fixed point theorem in real-cohesive type theory \cite{Shulman_18}, whereby the fixed point becomes a crisp function of the endomorphism .

\begin{theorem}\label{thm:uniform_FPT}
Let $(\CC,\#,\delta)$ be a magmoidal category with diagonals, equipped with a copointed endofunctor $\flat$. Assume given an epimorphism $p\colon B\to A$ with section $s$, and morphisms $F\colon A\times B\to C$ such that the object $C^C$ exists, with evaluation map $\ev\colon C^C\#C\to C$. 

If there is a morphism $\idx^\flat \colon \flat(C^C)\to A$ such that
\[
    \xymatrix{
        \flat(C^C)\# B \ar[rr]^-{\idx^\flat\#\id} \ar[d]_{e\#(p\#\id)\delta_B} && A\# B \ar[d]^F \\
        C^C \# (A\# B) \ar[r]_-{\id\# F} & C^C \# C \ar[r]_-{\ev} & C\,.
    }
\]
Then the crisp morphism $\fix^\flat$, defined as the composite
\[
    \xymatrix{
        \flat(C^C) \ar[r]^-{\idx^\flat} & A \ar[r]^-{\delta}  & A\#A\ar[rr]^{\id\#s} && A\#B \ar[r]^-F & C
    },
\]
returns the fixed point of an endomorphism of $C$, in the sense that
\[
    \xymatrix{
        \flat(C^C) \ar[rr]^-{\fix^\flat} \ar[d]_{\delta} && C\\
        \flat(C^C)\#\flat(C^C) \ar[rr]_-{e\# \fix^\flat} &&C^C\# C\ar[u]_{\ev}\,.
    }
\]
\end{theorem}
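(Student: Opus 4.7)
The plan is to view this as the morphism-level lift of Theorem~\ref{fixed_point_thm_Yanofsky_variation}: there, a generalised element $a\colon t\to A$ satisfying a hypothesis produced a fixed point via an explicit formula built from a lift $b_a$ of $a$ through $p$. Here the crisp morphism $\idx^\flat\colon \flat(C^C)\to A$ plays the role of $a$, now varying crisply in $\sigma$, and $b_a := s\circ \idx^\flat\colon \flat(C^C)\to B$ plays the role of the lift of $a$; note that $p\circ b_a = \idx^\flat$. The strategy is to replay the earlier proof entirely at the level of morphisms, using only naturality of $\delta$ (on $\idx^\flat$ and on $s$), bifunctoriality of $\#$, and the section identity $p\circ s = \id_A$.

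First I would rewrite $\fix^\flat$ in the symmetric form
\[
\fix^\flat \;=\; F\circ (\id_A \# s)\circ \delta_A\circ \idx^\flat \;=\; F\circ (\idx^\flat \# b_a)\circ \delta_{\flat(C^C)},
\]
by invoking naturality of $\delta$ on $\idx^\flat$, which gives $\delta_A\circ \idx^\flat = (\idx^\flat\#\idx^\flat)\circ \delta_{\flat(C^C)}$, and then pulling $s$ out via bifunctoriality. Precomposing the hypothesised commuting square for $\idx^\flat$ with $\id_{\flat(C^C)}\# b_a$, and simplifying once more by bifunctoriality, yields
\[
F\circ (\idx^\flat \# b_a) \;=\; \ev\circ (e\# g), \qquad \text{where } g := F\circ (p\#\id_B)\circ \delta_B\circ b_a.
\]

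The third step is to identify $g$ with $\fix^\flat$: by naturality of $\delta$ on $s$ together with the section identity $p\circ s = \id_A$,
\[
g \;=\; F\circ ((p\circ s)\# s)\circ \delta_A\circ \idx^\flat \;=\; F\circ (\id_A \# s)\circ \delta_A\circ \idx^\flat \;=\; \fix^\flat.
\]
Composing the identity from the previous step with $\delta_{\flat(C^C)}$ and substituting gives $\fix^\flat = \ev\circ (e\# \fix^\flat)\circ \delta_{\flat(C^C)}$, which is exactly commutativity of the target square.

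The only real obstacle is bookkeeping: one must repeatedly slide $s$, $\idx^\flat$, and $e$ past each other through bifunctoriality of $\#$, and keep careful track of which instance of naturality of $\delta$ is being applied at each stage. No associativity, symmetry, projections, or comonad multiplication for $\flat$ beyond the counit $e$ is required, so the argument stays entirely within the minimalist magmoidal setting of the earlier sections.
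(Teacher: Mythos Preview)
Your proposal is correct and follows essentially the same route as the paper's proof: both arguments reduce to a diagram chase using naturality of $\delta$ with respect to $\idx^\flat$ and $s$, the section identity $p\circ s=\id_A$, bifunctoriality of $\#$, and one instance of the hypothesised square (precomposed with $\id\# b_a$). The paper packages these same steps into a single large pasted diagram, whereas you unroll them as a chain of equalities; the mathematical content and the ingredients invoked are identical.
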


\begin{proof}
Write the endomorphism object as $E:=C^C$ for simplicity.
Consider the following diagram, which uses the assumption on $\idx^\flat$ for the bottom right rectangle:
\[\scalebox{0.85}{
  \centerline{
    \xymatrix{
        \flat(E) \ar[r]^-{\delta} \ar@{=}[d] & \flat(E)\# \flat(E) \ar@{=}[d] \ar[rrrrr]^-{e\# \fix^\flat} &&&&& E\# C \ar[r]^-{\ev} & C \ar@{=}[d]\\
        \flat(E) \ar[r]^-{\delta} \ar@{=}[dd] & \flat(E)\# \flat(E) \ar[r]^-{\id\#\idx^\flat} & \flat(E)\#A \ar[r]^-{e\#\delta} \ar@{=}[d] & E\#(A\#A)\ar[rr]^{\id\#(\id\#s)} \ar[dr]^{\id\#(s\#s)} && E\#(A\#B) \ar[r]^-{\id\#F} \ar@{=}[d] & E\#C\ar[r]^-{\ev}& C\ar@{=}[d]\\
         && \flat(E) \#A \ar[r]^-{\id\#s} \ar[d]_{\idx^\flat\#\id} &\flat(E)\# B \ar[r]_{e\#\delta} \ar[d]_{\idx^\flat\# \id}&E\#(B\#B) \ar[r]_{\id\#(p\#\id)}& E\#(A\#B) \ar[r]_-{\id\#F}& E\#C \ar[r]_-{\ev}& C\ar@{=}[d]\\
        \flat(E)\ar[r]_{\idx^\flat} & A \ar[r]_-{\delta} & A\# A \ar[r]_{\id\#s} & A\#B\ar[rrrr]_F &&&& C\,.
    }
  }
}\]
The bottom edge is just $\fix^\flat$ again, as needed.
I note in passing that naturality of $\delta$ is required with respect to $\idx^\flat$ and $s$.
\end{proof}

In the degenerate case where $\flat = \id$, we get a more familiar fixed-point combinator $C^C\to C$.

\begin{remark}\label{rem:uniform_FPT_variations}
If one has the data of a \emph{crisp} section $s^\flat\colon \flat(A)\to B$, then much the same argument applies if $\flat$ is assumed to be a comonad. We can define $\fix^\flat$ in this case as
\[
    \xymatrix{
        \flat(C^C) \ar[r]^-m &\flat\flat(C^C)  \ar[r]^-{\flat(\idx^\flat)} & \flat(A) \ar[rr]^-{(e\#s^\flat)\circ\delta}  &  & A\#B \ar[r]^-F & C
    }.
\]
The only change is some judicious insertions of $m$, so as to get composites of the form $\flat(C^C)\xrightarrow{m}\flat\flat(C^C)\xrightarrow{\flat(\idx^\flat)} \flat(A)\xrightarrow{s^\flat} B$.

Conversely, if one has crisp $s^\flat$ but plain $\idx\colon C^C\to A$, then again a version of $\fix^\flat$ can be defined.
In this case, $m$ is not needed, since it suffices to consider composites of the form $\flat(C^C)\xrightarrow{\flat(\idx)} \flat(A)\xrightarrow{s^\flat}A$.
\end{remark}

\section{Coda}

On paper, the distance between the fiat existence of $\idx^\flat$ and the proved existence of $\fix^\flat$ does not seem like much. 
Or to put it another way, the assumption on $\idx^\flat$ seems close to what we are aiming to prove at the end, so what simpler assumption(s) can be used to construct $\idx^\flat$?
Here's an example, leaving out the complication of the split surjection $p\colon B\to A$ and the crisp maps, that is closer in spirit to existing approaches.

\begin{theorem}\label{thm:uniform_FPT_from_split_epi}
Let $(\CC,\#,\delta)$ be a magmoidal category with diagonals.
Assume given objects $A,C$ such that the internal hom objects $C^C$ and $C^A$ exist, and a map $\alpha\colon A\onto C^A$, with a section $\ell\colon C^A\to A$. 
Then there is a map $\fix\colon C^C\to C$ returning a fixed point.
\end{theorem}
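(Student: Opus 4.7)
The plan is to reduce to the degenerate case of Theorem~\ref{thm:uniform_FPT} (take $\flat=\id$, $B=A$, $p=s=\id_A$) by constructing the missing $\idx\colon C^C\to A$ out of the split epi $\alpha$. Take the two-variable function of that theorem to be $F:=\ev_A\circ(\alpha\#\id_A)\colon A\# A\to C$ (writing $\ev_A$ for the evaluation of $C^A$) and let $d:=F\circ\delta_A\colon A\to C$ be its diagonal restriction; informally $d(a)=\alpha(a)(a)$.

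First I form the morphism $\ev_C\circ(\id_{C^C}\# d)\colon C^C\# A\to C$, which represents the family $a\mapsto\sigma(d(a))$ as $\sigma$ varies over $C^C$. By the defining natural bijection $\CC(W\# A,C)\cong\CC(W,C^A)$ applied with $W=C^C$, this map transposes to a unique morphism $\tilde\rho\colon C^C\to C^A$ characterised by $\ev_A\circ(\tilde\rho\#\id_A)=\ev_C\circ(\id_{C^C}\# d)$. I then set $\idx:=\ell\circ\tilde\rho$. Functoriality of $\#$ applied to the section identity $\alpha\circ\ell=\id_{C^A}$ yields
\[
F\circ(\idx\#\id_A)=\ev_A\circ\bigl((\alpha\circ\ell)\#\id_A\bigr)\circ(\tilde\rho\#\id_A)=\ev_A\circ(\tilde\rho\#\id_A)=\ev_C\circ(\id_{C^C}\# d),
\]
which is precisely the square demanded by Theorem~\ref{thm:uniform_FPT} in this degenerate case. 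Invoking that theorem delivers the desired $\fix\colon C^C\to C$.

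The only point requiring care is hygiene: checking that nothing beyond the data of a bare magmoidal category with diagonals (plus the stipulated internal homs) has been smuggled in. The argument uses only functoriality of $\#$, the universal properties of $C^A$ and $C^C$, the diagonal $\delta_A$ (through $d$) and the naturality of $\delta$ already consumed by Theorem~\ref{thm:uniform_FPT}, and the section identity $\alpha\circ\ell=\id_{C^A}$ — no associator, projection, symmetry or unit structure enters. Conceptually, $\ell$ is exactly the missing datum that made $\idx$ a hypothesis rather than a conclusion in Theorem~\ref{thm:uniform_FPT}: once a right inverse to $\alpha$ is specified, one may tautologically lift the name of $\sigma\circ d$ back into $A$, and the rest is bookkeeping.
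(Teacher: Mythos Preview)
Your proof is correct and essentially identical to the paper's: you construct the same $F=\ev\circ(\alpha\#\id_A)$ and the same $\idx=\ell\circ\tilde\rho$, where your $\tilde\rho$ is exactly what the paper denotes $(F\circ\delta)^*$ (since $\ev_C\circ(\id\# d)=\ev_C\circ(\id\# F)\circ(\id\#\delta_A)$ by functoriality), and you verify the hypothesis of Theorem~\ref{thm:uniform_FPT} via the section identity $\alpha\circ\ell=\id_{C^A}$ in the same way.
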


\begin{proof}
Define the map $F$ as 
\[
    A\# A \xrightarrow{\alpha\# \id} C^A \# A \xrightarrow{\ev} C,
\]
and $\idx$ as
\[
    C^C \xrightarrow{(F\circ \delta)^*} C^A \xrightarrow{\ell} A\, .
\]
Here $(F\circ \delta)^*$ corresponds to the map
\[
    \xymatrix{
    C^C \# A  \ar[r]^-{\id\# \delta} &
    C^C\# (A\# A) \ar[r]^-{\id\# F} & C^C \# C \ar[r]^-{\ev} & C
    }
\]
and is the unique morphism making the square
\[
    \xymatrix{
    C^C \# A \ar[rrr]^{(F\circ \delta)^*\#\id} \ar[d]_{\id\# \delta} &&& C^A \# A \ar[d]^{\ev}\\
    C^C\# (A\# A) \ar[rr]_-{\id\# F} && C^C \# C \ar[r]_-{\ev} & C
    }
\]
commute, by the defining property of $C^A$.
Inserting $\id\#\id = \id\# \alpha\circ \ell\colon C^A \# A \to A\# A \to C^A\# A$ into this square, this shows that the hypothesis on $\idx$ from Theorem~\ref{thm:uniform_FPT} holds, taking $B=A$, and $p=s=\id$.
We can construct the resulting map $\fix$ explicitly as
\[
    \fix\colon C^C \xrightarrow{(F\circ \delta)^*} C^A \xrightarrow{\ell} A \xrightarrow{F\circ \delta} C.\qedhere
\]
\end{proof}

\begin{remark}
Escard\'o \cite{Escardo_LFPT} has written a formalized treatment of Lawvere's fixed-point theorem using the Agda proof assistant. It is similar in form to the previous theorem, albeit in the language of Martin-L\"of type theory (MLTT), and from a category theoretic point point of view can be seen as taking place in the syntactic category of MLTT.
In fact two versions are given, one with an explicit splitting of what is here written as $\alpha$, and one where $\alpha$ is only taken as surjective (where this has a specific definition in type theory).
\end{remark}

\begin{remark}
One can have a version of the previous theorem in the presence of a copointed endofunctor $\flat$, using instead of a section $\ell$, a crisp map $\ell^\flat\colon \flat(C^A)\to A$ satisfying $\alpha\circ \ell^\flat = e_{C^A}$.
This gives crisp $\fix^\flat\colon \flat(C^C)\to C$ as before, the proof of which is left as an exercise for the reader.
\end{remark}

Taking $A=C$ in the theorem, we get an even more familiar-seeming result, albeit in the setting of a magmoidal category with diagonals. 
Recall that we say $C$ is a \emph{reflexive object} if $C^C$ exists and there is an epimorphism $\app\colon C\to C^C$ with a section $\lam\colon C^C\to C$ . The classic example, in the special case of a cartesian closed category, arises from $\lambda$-calculus (see eg \cite{Scott_80}).

\begin{corollary}\label{cor:uniform_FPT_from_reflexive}
Given a magmoidal category $(\CC,\#,\delta)$ with diagonals and given a \emph{reflexive object} $(C,\app,\lam)$, we have the fixed-point map\footnote{The map $\mathrm{uncurry}(\app)$ corresponds to $F$ above, since $\app$ is the curried form of $F$.} 
\[
    \fix\colon C^C\xrightarrow{(\mathrm{uncurry}(\app)\circ\delta)^*} C^C \xrightarrow{\lam} C \xrightarrow{\mathrm{uncurry}(\app)\circ\delta} C.
\]
\end{corollary}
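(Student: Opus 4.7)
The plan is to apply Theorem~\ref{thm:uniform_FPT_from_split_epi} directly, with $A := C$. Under the hypothesis that $(C,\app,\lam)$ is a reflexive object, the data of an epimorphism $\alpha\colon A \onto C^A$ equipped with a section $\ell\colon C^A \to A$ are supplied by $\alpha := \app\colon C \onto C^C$ together with $\ell := \lam\colon C^C \to C$. The internal hom objects required by the theorem ($C^C$ and $C^A = C^C$) exist by assumption on the reflexive object, so all hypotheses are met verbatim.

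Next I would unwind the morphism $F\colon A\# A \to C$ constructed inside the proof of Theorem~\ref{thm:uniform_FPT_from_split_epi}, namely $F = \ev \circ (\alpha \# \id)$. Substituting the chosen data, this becomes the composite
\[
C\# C \xrightarrow{\app\# \id} C^C \# C \xrightarrow{\ev} C,
\]
which is by definition $\mathrm{uncurry}(\app)$. This identification is the only genuine content of the corollary, and it follows immediately from the universal property of the internal hom.

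With these substitutions in hand, the explicit formula for $\fix$ produced at the end of the proof of Theorem~\ref{thm:uniform_FPT_from_split_epi},
\[
\fix\colon C^C \xrightarrow{(F\circ \delta)^*} C^A \xrightarrow{\ell} A \xrightarrow{F\circ \delta} C,
\]
becomes (setting $A=C$, $\ell = \lam$, and $F = \mathrm{uncurry}(\app)$) precisely the composite displayed in the statement of the corollary. The conclusion that this map returns a fixed point is then inherited from the theorem.

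No serious obstacle arises: the corollary is a direct specialisation of the preceding theorem, and the verification reduces to the observation that the curried/uncurried adjunction identifies the $F$ from the theorem with $\mathrm{uncurry}(\app)$ when $\alpha = \app$. If anything, the only point worth being careful about in the write-up is to make clear that naturality of $\delta$ is invoked only with respect to the morphisms $\app$ (or rather $\idx$, i.e.\ $\lam\circ(F\circ\delta)^*$) and $\lam = s$, so the ``local'' comagma hypothesis used throughout the paper continues to suffice.
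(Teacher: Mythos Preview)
Your proposal is correct and matches the paper's approach exactly: the paper treats this as an immediate corollary with no separate proof, relying on the footnote that $\mathrm{uncurry}(\app)$ is the $F$ of Theorem~\ref{thm:uniform_FPT_from_split_epi}, which is precisely the identification you spell out. The only extra content you add---the remark about which naturality squares of $\delta$ are actually used---is harmless elaboration in the spirit of the earlier sections.
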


Reflexive objects in \emph{cartesian closed} categories are intimately related to models of $\lambda$-calculus \cite{Scott_80}, but it is not yet clear what corresponds to these more general reflexive objects in magmoidal categories---ideally some substructural analogue of $\lambda$-calculus, linked to the $\mathit{BW}$-logic coming from combinatory algebra.

{\raggedright
\printbibliography}

\onecolumn\newpage

\end{document}